\newcommand{\fenv}[1]%
{\ensuremath{\,\overrightarrow{\operatorname{env}}_{#1}}}
\newcommand{\benv}[1]%
{\ensuremath{\,\overleftarrow{\operatorname{env}}_{#1}}}
\theoremstyle{thmstyleone}%
\theoremstyle{thmstyletwo}%
\theoremstyle{thmstylethree}%
\newtheorem{theorem}{Theorem}
\newtheorem{lemma}[theorem]{Lemma}
\newtheorem{proposition}[theorem]{Proposition}%
\newtheorem{definition}{Definition}%
\newtheorem{alg}{Algorithm}[section]
\theoremstyle{definition}
\newcommand{\la}{\langle}
\newcommand{\ra}{\rangle}
\newcommand{\dom}{{\rm\textbf{dom}\,}}
\newcommand{\nexto}{\kern -0.54em}
\newcommand{\dZ}{{\cal Z \kern -0.7em Z}}
\newcommand{\dC}{{\rm\hbox{C \kern-0.8em\raise0.2ex\hbox{\vrule height5.4pt width0.7pt}}}}
\newcommand{\dQ}{{\rm\hbox{Q \kern-0.85em\raise0.25ex\hbox{\vrule height5.4pt width0.7pt}}}}
\newcommand{\HH}{\mathcal{H}}
\newcommand{\RR}{\mathbb{R}}
\newcommand{\ZZ}{\mathbb{Z}}
\date{}
\begin{document}
	\title{A Third Order Dynamical System for Mixed Variational Inequalities}
	\author{Oday Hazaimah
		\footnote{E-mail: {\tt odayh982@yahoo.com}. https://orcid.org/0009-0000-8984-2500. 
			St. Louis, MO, USA}}
	%
	
	\maketitle
	
	\begin{abstract}
		In this paper, we introduce and study a class of resolvent dynamical systems to investigate some inertial proximal methods for solving mixed variational inequalities. These proposed methods along with their discretizations and derived rates of convergences require only the monotonicity for mixed variational inequalities under some mild conditions.  
		We establish the global asymptotically and exponentially stability of the solution of the resolvent dynamical system for monotone operators. Ideas and techniques of this paper may be extended for other classes of variational inequalities and equilibrium problems. 
		\\
		
		\noindent {\textbf{Keywords}:} Mixed Variational Inequalities; Dynamical Systems; Discretization; Proximal Methods; Global Stability.
		\medskip 
		
		\noindent \textbf{\small Mathematics Subject Classification:}{ 34B15, 34B16, 34N05, 65L05, 65L10, 65L11.}
	\end{abstract}

	\section{Introduction}
	Variational mathematical models govern the foundational settings of many physical, biological and financial systems \cite{D-R,Giannessi,Kinder, Korpelevich}. These variational models arise as optimization problems, equilibrium problems, variational inequalities, complementarity problems and fixed point problems \cite{Blum,Dupuis,Noor-Oettli,Noor-inequality}. Variational inequality is a general mathematical framework that can be reformulated in terms of dynamical systems to study the existence and stability of the solution of variational inequality such that a certain ordinary differential equation is associated to a given variational inequality, namely, stationary (equilibrium) points of a dynamical system coincide with solutions of the corresponding variational inequality. In recent years, the study of dynamical systems associated with variational inequalities are being investigated as powerful tools for analyzing complex dynamics and optimizing systems by using methods of resolvent operators and projection operators over a set including; the proximal point algorithm, the gradient projection algorithm, to name a few (see, for instance, \cite{Noor-resolvent, Bin-Mohsin,Hai, Noor-WH,Noor-inequality}). Unlike single-objective optimization problems, variational inequalities have a vector-valued function and it is equivalent to an optimization problem only if this vector-valued function is the gradient of an objective function. 
	It is well known that the solution of the variational inequality exists if either the constraint set is bounded or the corresponding mapping is strongly monotone. Variational inequalities involving a nonlinear term is called the mixed variational inequality or variational inequality of the second kind \cite{Bin-Mohsin}. Mixed variational inequalities (MVIs) represent a class of mathematical problems that arise in diverse situations involving multiple agents as in game theory, mechanics, economics, and operation research. The solution of the dynamical system converges to the solution of the corresponding MVI starting from any given initial condition as it is known that dynamical systems may exhibit dynamics that are highly sensitive to initial conditions. This can be fulfilled by establishing the equivalence between variational inequalities and fixed-point problems by using the concept of Euclidean projection and resolvent operator. 
	Numerous projection methods have been designed to solve different classes of variational inequalities such as basic projection methods and its variant forms including Wiener-Hopf equations \cite{Noor-WH}, two-step extragradient projection \cite{Korpelevich}, and hyperplane projection methods \cite{Bello-Hazaimah} (relies on finding a suitable hyperplane that separates the solution of the problem from the current iterate and then performs a metric projection step). When the objective function in the optimization problem has discontinuous points, then proximal gradient approaches can be employed to compute the subgradients of the nonsmooth objective function (see, e.g., \cite{Bello-Hazaimah}, \cite{Korpelevich}) and these discontinuities are due to the constraints associated with the feasible region of the variational inequality. The most commonly used method for MVIs is the proximal point algorithm, and since proximal operators are generalizations of projection operators, it follows that the most commonly used method for variational inequalities, as a particular case of MVI, is the projection algorithm.
	Noor \textit{et al.} \cite{Noor-inequality} proposed proximal methods and projected dynamical systems for variational inequalities. While Noor \cite{Noor-resolvent} extended the first-order resolvent dynamical system for mixed variational inequalities. Following these developments, Bin-Mosin \textit{et al.} \cite{Bin-Mohsin} considered second-order resolevnt dynamical systems for mixed variational inequalities. In this paper, we continue this research direction by considering third order dynamical systems for solving mixed variational inequalities using resolvent operators. Third-order ODEs are used to describe and model the motion in electrical circuits involving transistors \cite{Goeleven}.
	In this manuscript, we are interested in designing a continuous-time dynamical system such that its solution converges to the solution of the corresponding MVI starting from any given initial condition, and based on this design we propose and derive discrete-time algorithms tailored for the same problem. Thus, our aim can be summarized as: (i) using finite difference processes to identify a broad class of variational inequalities, namely, MVI by implicit and explicit discretizations for the associated dynamical system represented in terms of resolvent operators, (ii) derive their rates of convergence, and finally (iii) discuss the global stability for solutions of the third-order dynamical system, to the best of our knowledge, this work is the first to use third-order differential equations to model a class of variational inequalities by resolvent operators. 
	\section{Preliminaries and Notations}
	Some optimization-related basics and significant foundations are presented in this section from monotone operators theory, dynamical systems theory, convex and variational analysis, see \cite{bauschke,Kinder} for more details. Let $\HH$ be a real Hilbert space equipped with inner product $\la \cdot , \cdot \ra$ and induced norm $\|\cdot\|:=\sqrt{\la\cdot,\cdot\ra}$. Let $T:\HH\rightrightarrows\HH$ be a set-valued map with its domain denoted $\dom(T) :=\{ x\in\HH; T(x) <\infty\}$. 
	For any maximal monotone operator $T$ the {\em \textbf{resolvent}} operator associated with $T$ is the full domain single-valued 
	operator in $\HH$ given by $J_{T}:=(I+T)^{-1}:\HH\rightarrow \dom(T)$ where $I\colon\HH\to\HH$ denotes the identity operator. We are interested in designing dynamical systems models to derive discrete-time schemes for finding solutions to the mixed variational inequality which can be formulated as: find \(x^*\in\HH\), such that  
	\begin{equation}\label{MVI}
		\langle T(x^*) , \; x-x^* \rangle +\varphi(x)-\varphi(x^*) \geq 0 \ , \ \forall \ x\in\HH
	\end{equation}
	where $T:\dom \varphi\to\HH$ is an operator and $\varphi:\HH\to\RR\cup \{\infty \}$ is a proper ($\dom \varphi\not=\emptyset$), lower semi-continuous convex function. If $C$ is a closed and convex set in $\HH$ 
	and 
	\(\varphi(x)=I_C(x)\)
	is the indicator function of $C$ then the resolvent operator is the metric projection of $\HH$ onto $C$ (i.e., $J_\varphi\equiv\Pi_C$), and problem (\ref{MVI}) is reduced to the classical variational inequality which was studied and considered by Stampacchia \cite{Stampacchia} as follows: find $x^*\in\HH$ such that 
	\begin{equation}\label{VI}
		\la T(x^*), \; x-x^*\ra\geq 0 \ , \ \forall \ x\in C.
	\end{equation}
	Problem \eqref{MVI} is equivalent to the generalized equation (a.k.a monotone inclusion):  
	\begin{equation}\label{inclusion}
		\text{Find} \hspace{.5cm} x^*\in\HH\hspace{.5cm} \text{such that}\hspace{.5cm} 
		0\in T(x^*)+\partial \varphi(x^*), 
	\end{equation}
	where the subdifferential mapping $\partial \varphi:\HH\rightrightarrows\HH$, defined as $\partial \varphi(x):=\{u\in\HH\ ;\ \varphi(y)\geq \varphi(x)+\langle u,y-x\rangle, \;\forall y\in\HH\}$ is a maximal monotone operator. The inclusion \eqref{inclusion} may be extended to finding an element of the sum of two monotone operators and for this, the classical \textit{forward-backward} method \cite{D-R} is the most well-known splitting method for solving such problems. A particular case of \eqref{inclusion} is when the operator $T$ is the gradient of a smooth function $f$, i.e., \[0\in\nabla f(x^*)+\partial \varphi(x^*).\]
	The latter inclusion is precisely a convex nonsmooth optimization problem \[\min_{x\in\HH}f(x)+\varphi(x).\]
	Moreover, if $T\equiv 0$, then \eqref{MVI} is exactly the above minimization problem with a convex nonsmooth objective function, i.e., \(\displaystyle\min_{x\in\HH}\varphi(x).\)
	If \(C^*=\{x\in\HH:\la x,y\ra\geq 0, \ \forall y\in\HH\}\) is a polar (dual)(conjugate) cone of a convex cone $C$ then the inequality \eqref{VI} is equivalent to finding $x\in C$ such that $$T(x)\in C^* \ \ \text{and} \ \ \la T(x),x\ra =0,$$ 
	which is called the generalized complementarity problem \cite{bauschke,Kinder,Noor-Oettli}.
	If the operator $T$ in \eqref{VI} is smooth, then the following well known result holds and can be viewed as a first order optimality condition for minimizing smooth functions:
	\begin{theorem} Let $C$ be a nonempty, convex and closed subset of $\HH$. Let $T$ be a smooth convex function. Then $x\in C$ is the minimum of the smooth convex $T(x)$ if and only if, $x\in C$ satisfies $$\la T'(x), y-x\ra\geq 0, \forall y \in C $$ where $T'$ is the Frechet derivative of $T$ at $x\in C$.
	\end{theorem}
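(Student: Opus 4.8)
The plan is to establish the two implications separately, relying on the convexity of $C$ to remain feasible along line segments and on the convexity of $T$ to convert first-order information into a global comparison of values.

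For the forward direction (minimizer $\Rightarrow$ variational inequality), I would fix an arbitrary $y\in C$ and exploit convexity of $C$: the point $x_t := x + t(y-x) = (1-t)x + ty$ belongs to $C$ for every $t\in[0,1]$. Since $x$ minimizes $T$ over $C$, we have $T(x_t)\geq T(x)$, so the difference quotient $\big(T(x_t)-T(x)\big)/t$ is nonnegative for every $t\in(0,1]$. Letting $t\downarrow 0$ and invoking Fr\'echet differentiability of $T$ at $x$ --- which guarantees that this quotient converges to the directional derivative $\la T'(x), y-x\ra$ --- yields $\la T'(x), y-x\ra\geq 0$. As $y\in C$ was arbitrary, the stated inequality follows.

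For the reverse direction (variational inequality $\Rightarrow$ minimizer), I would invoke the gradient inequality characterizing differentiable convex functions: for convex $T$ and any $y\in C$, one has $T(y)\geq T(x)+\la T'(x), y-x\ra$. Combining this with the hypothesis $\la T'(x), y-x\ra\geq 0$ gives $T(y)\geq T(x)$ for all $y\in C$, so $x$ is a minimizer of $T$ over $C$.

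Both steps are essentially standard, so I do not expect a serious obstacle; the only point requiring care is the passage to the limit in the forward direction. There I would make explicit that Fr\'echet differentiability controls the remainder term so that $\big(T(x+t(y-x))-T(x)\big)/t \to \la T'(x), y-x\ra$ as $t\downarrow 0$, with the convexity of $C$ being exactly what permits staying inside the feasible region while taking this one-sided limit. In the reverse direction the substantive ingredient is the convexity of $T$; without it the variational inequality would be only necessary, not sufficient, for optimality.
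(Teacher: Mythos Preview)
Your argument is correct and is the standard proof of this classical first-order optimality condition. Note, however, that the paper does not actually supply a proof of this theorem: it is stated in the preliminaries as a well-known result and followed immediately by commentary, with no proof environment or argument given. There is therefore nothing in the paper to compare your approach against; your write-up simply fills in what the paper leaves as background.
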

	This theorem shows that the variational inequalities are natural links and analogous to the minimization of the convex differentiable functional subject to certain constraint which has led to study a more general framework of variational inequalities applied to nonconstrained and nonsmooth optimization problems. In the following, we state some useful definitions and properties for several kinds of monotone maps followed by well-known facts on resolvent and projection operators, mixed variational inequalities and global stability at an exponential rate of equilibrium points of resolvent dynamical systems.
	\begin{definition}\label{monotonedefi} The operator $T:\HH\to\HH$, is said to be:
		\begin{itemize}
			\item [(i)] Monotone, if
			$$\langle T(x)-T(y), x-y\rangle \geq 0,\ \ \forall x, y \in\HH.$$ 
			\item [(ii)] Strictly monotone if the above inequality is strict for all $x\not= y$ in $\HH.$
			\item [(iii)] Strongly monotone if there exists a modulus $\lambda > 0$ such that
			$$\langle T(x)-T(y), x-y\rangle\geq\lambda\|x-y\|^2 , \ \ \forall x, y \in\HH.$$
		\end{itemize}
	\end{definition}
	Notice that the implication $(iii)\implies (i)$ holds, whereas the converse need not be true generally, meaning that monotonicity is a weaker property than strongly monotonicity. 
	\begin{definition}
		The operator $T:\HH\to\HH$ is called Lipschitz continuous or $L$-Lipschitz if there exists some nonnegative $L\geq 0$, such that $$\|Tx-Ty\|\le L\|x-y\|\ ,\quad\forall x,y\in\HH.$$   
	\end{definition}
	\begin{proposition}[\cite{bauschke}]\label{proj}
		Let $C$ be nonempty closed convex subset of $\HH$, and $\Pi_C$ be the orthogonal projection onto $C$. For all $x,y\in \HH$ and all $z\in C$ the following hold: \item[ {\bf(i)}] $ \|\Pi_{C}(x)-\Pi_{C}(y)\|^2 \leq \|x-y\|^2-\|(x-\Pi_{C}(x))-\big(y-\Pi_{C}(y)\big)\|^2;$ \\ \item[ {\bf(ii)}] $\la x-\Pi_C(x),z-\Pi_C(x)\ra \leq 0.$ \end{proposition}
	This proposition tailored for projection operators and variational inequalities. In the light of Proposition \ref{proj}, we have the following result drawing similar connections between the resolvent operator and mixed variational inequalities \eqref{MVI} in which it plays a crucial role for deriving the convergence of the proposed implicit and explicit inertial proximal methods, which is known as the resolvent lemma.
	\begin{lemma}\label{resolventlemma}
		Let $\varphi$ be a proper convex lower semicontinuous function. For all $x\in\HH$ the following inequality hold:
		\[\la x-J_{\varphi}(x),y-J_{\varphi}(x)\ra +\varrho\varphi(y)-\varrho\varphi(J_{\varphi}(x))\leq 0, \ \ \forall y\in\HH\]
		where \(J_{\varphi}\) is the resolvent operator which belongs to the feasible set $C$.
	\end{lemma}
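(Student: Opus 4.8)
The plan is to read the inequality off from the variational (proximal) characterization of the resolvent. Interpreting $J_{\varphi}=(I+\varrho\,\partial\varphi)^{-1}$ as the proximal map of $\varrho\varphi$, I would first record that for each $x\in\HH$ the point $u:=J_{\varphi}(x)$ is the unique minimizer over $\HH$ of the function $y\mapsto\thalb\N{y-x}^2+\varrho\varphi(y)$. Showing that this minimizer exists, is unique, and defines a full-domain single-valued map is exactly where the standing hypotheses on $\varphi$ are consumed: properness, convexity and lower semicontinuity make $\partial\varphi$ maximally monotone, whence $(I+\varrho\,\partial\varphi)^{-1}$ is single-valued and everywhere defined on $\HH$, and $u\in\dom\varphi\subseteq C$.

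The crux of the argument is the translation of optimality into a subgradient. Using Fermat's rule together with the Moreau--Rockafellar sum rule (no extra constraint qualification is needed, since the quadratic term is finite and continuous everywhere), I would rewrite the minimality of $u$ as the inclusion $0\in(u-x)+\varrho\,\partial\varphi(u)$, equivalently $\tfrac{1}{\varrho}(x-u)\in\partial\varphi(u)$. This step converts the \emph{definition} of $J_{\varphi}$ into a concrete subgradient of $\varphi$ at the point $u$, and it is the only place where maximal monotonicity is genuinely used.

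With the subgradient in hand the remainder is routine. Setting $g:=\tfrac{1}{\varrho}(x-u)\in\partial\varphi(u)$ and invoking the defining inequality of the subdifferential, $\varphi(y)\ge\varphi(u)+\la g,\,y-u\ra$ for all $y\in\HH$, I would multiply through by $\varrho>0$ to obtain $\la x-u,\,y-u\ra\le\varrho\varphi(y)-\varrho\varphi(u)$, and then transpose every term to one side to reach $\la x-J_{\varphi}(x),\,y-J_{\varphi}(x)\ra+\varrho\varphi(J_{\varphi}(x))-\varrho\varphi(y)\le0$, which is the claimed estimate; when $\varphi(y)=+\infty$ the bound holds vacuously.

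The main obstacle is conceptual rather than computational: one must be certain both that the resolvent is single-valued and, decisively, that $x-J_{\varphi}(x)\in\varrho\,\partial\varphi(J_{\varphi}(x))$, which is the maximal-monotonicity content furnished by the convex lower semicontinuous assumption; once that characterization is secured the inequality follows in a single line. As a consistency check I would specialize to $\varphi=I_{C}$, so that $J_{\varphi}=\Pi_{C}$: for $y\in C$ the $\varphi$-terms drop out and the estimate reduces to the obtuse-angle inequality $\la x-\Pi_{C}(x),\,y-\Pi_{C}(x)\ra\le0$ of Proposition~\ref{proj}(ii), while for $y\notin C$ the term $-\varrho\varphi(y)$ renders it vacuous --- this is precisely the sign placement of the $\varphi$-terms that the estimate requires.
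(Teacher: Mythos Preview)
The paper states Lemma~\ref{resolventlemma} without proof --- it is invoked as a known ``resolvent lemma'' --- so there is no argument in the paper to compare yours against. Your route is the standard one and is sound: interpret $J_{\varphi}=(I+\varrho\,\partial\varphi)^{-1}$ as the proximal map, extract the inclusion $\tfrac{1}{\varrho}\bigl(x-J_{\varphi}(x)\bigr)\in\partial\varphi\bigl(J_{\varphi}(x)\bigr)$ from Fermat's rule, and apply the subdifferential inequality.

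It is worth making explicit what you only hint at in your consistency check. The inequality you actually derive is
\[
\la x-J_{\varphi}(x),\,y-J_{\varphi}(x)\ra+\varrho\,\varphi\bigl(J_{\varphi}(x)\bigr)-\varrho\,\varphi(y)\le0,
\]
with the two $\varphi$-terms interchanged relative to the paper's displayed statement. Yours is the correct orientation: your $\varphi=I_{C}$ test already exposes this (for $y\notin C$ the paper's version reads $+\infty\le0$), and the choice $\varphi(z)=\tfrac12\|z\|^{2}$, $\varrho=1$, $x=0$ turns the paper's inequality into $\tfrac12\|y\|^{2}\le0$. So your argument proves the mathematically valid form of the lemma and, in effect, corrects a sign typo in the paper's statement; you should say so plainly rather than leaving it implicit in the closing sanity check.
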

	By applying Lemma \ref{resolventlemma}, one can introduce the fixed point formulation of mixed variational inequalities as follows.
	\begin{proposition}[\cite{Kinder}]\label{Kinder-proposition}
		Let \(J_{\varphi}\) be the resolvent operator for the proper convex lower semicontinuous function \(\varphi\) and \(T:\HH\to\HH\) is the underlying operator. Then \(x\in\HH\) is a solution to the mixed variational inequality \eqref{MVI}, i.e., \[\la Tx,y-x\ra +\varphi(x)-\varphi(y)\leq 0.\]
		if and only if \ \(x=J_{\varphi}(x-\lambda T(x))\)
	\end{proposition}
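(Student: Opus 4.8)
The plan is to establish the equivalence by proving the two implications separately, and the key observation driving both is that the defining inequality of Lemma~\ref{resolventlemma} becomes exactly the mixed variational inequality \eqref{MVI} once the argument of the resolvent is taken to be $x-\lambda T(x)$. Throughout I would identify the resolvent parameter $\varrho$ in Lemma~\ref{resolventlemma} with the step size $\lambda$ in the fixed-point map, so that $J_{\varphi}=(I+\lambda\partial\varphi)^{-1}$; this bookkeeping is the only place where the two statements must be reconciled. The first direction is a direct substitution, while the converse requires reversing the computation and inverting the resolvent.

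For the implication ``$x=J_{\varphi}(x-\lambda T(x))\ \Rightarrow\ x$ solves \eqref{MVI}'' I would set $u:=x-\lambda T(x)$ and apply Lemma~\ref{resolventlemma} at the point $u$. Since $J_{\varphi}(u)=x$ by hypothesis, the lemma gives $\la u-x,\,y-x\ra+\lambda\varphi(y)-\lambda\varphi(x)\le 0$ for every $y\in\HH$. Substituting $u-x=-\lambda T(x)$ and dividing by $\lambda>0$ collapses this to the defining inequality of \eqref{MVI}, after the relabeling $x^{*}\mapsto x$ and $x\mapsto y$. This direction is essentially a one-line substitution and presents no real difficulty beyond keeping the signs and the factor $\lambda$ straight.

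For the converse ``$x$ solves \eqref{MVI}\ $\Rightarrow\ x=J_{\varphi}(x-\lambda T(x))$'' I would run the same computation in reverse. Reading \eqref{MVI} through the definition of the subdifferential recorded just after \eqref{inclusion}, the variational inequality says precisely that $-\lambda T(x)\in\lambda\partial\varphi(x)$, equivalently $0\in T(x)+\partial\varphi(x)$, which is the monotone inclusion \eqref{inclusion}. Rewriting this as $x-\lambda T(x)\in(I+\lambda\partial\varphi)(x)$ and applying $(I+\lambda\partial\varphi)^{-1}=J_{\varphi}$ to both sides yields $x=J_{\varphi}(x-\lambda T(x))$, as desired.

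The step I expect to be the genuine obstacle is the legitimacy of ``applying $(I+\lambda\partial\varphi)^{-1}$'' in the converse, that is, that the resolvent is a well-defined, single-valued, full-domain operator on $\HH$ so that the inclusion $x-\lambda T(x)\in(I+\lambda\partial\varphi)(x)$ has $x$ as its unique preimage. This is exactly where the hypotheses that $\varphi$ is proper, convex, and lower semicontinuous are used: because $\partial\varphi$ is maximal monotone, the Minty surjectivity theorem (equivalently, the well-posedness of the proximal minimization defining $J_{\varphi}$) guarantees that $I+\lambda\partial\varphi$ is a bijection of $\HH$ onto itself. I would state this well-posedness explicitly before inverting, rather than leaving it implicit, since apart from the routine sign-and-parameter bookkeeping it is the only point in the argument that requires more than elementary algebra.
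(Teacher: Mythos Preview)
The paper does not supply its own proof of this proposition; it is stated as a cited result from \cite{Kinder}, prefaced only by the remark that ``by applying Lemma~\ref{resolventlemma}, one can introduce the fixed point formulation.'' Your argument is correct and matches that hint: the implication from the fixed-point identity to \eqref{MVI} is exactly the substitution into Lemma~\ref{resolventlemma} that the paper alludes to, and your converse via the subdifferential inclusion \eqref{inclusion} and the single-valuedness of $(I+\lambda\partial\varphi)^{-1}$ is the standard route, with the maximal monotonicity of $\partial\varphi$ (hence Minty surjectivity) correctly identified as the one nontrivial ingredient.
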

	\begin{definition}
		The dynamical system converges to the solution set \(C^*\) of the mixed variational inequality \eqref{MVI} if the trajectory $x(t)$ satisfies \[dist(x(t),C^*):=\inf_{y\in C}\|x-y\|\longrightarrow 0, \ \forall t\geq 0.\]
	\end{definition}
	\begin{definition}
		The dynamical system is said to be globally exponentially stable if any trajectory $x(t)$ of the dynamical system satisfies
		\[\|x(t)-x^*\|\leq\rho\|x(t_0)-x^*\|exp(-\eta(t-t_0)), \ \ \ \forall t\geq t_0\]
		where \(\rho,\eta >0\) are constants and do not depend on the initial point.
	\end{definition}
	If the dynamical system is stable at the equilibrium point $x^*$ in the Lyapunov sense then the dynamical system is globally asymptotically stable at that point. It is noted that globally exponentially stable means the system must be globally stable and converge fast.

	\section{Main Results}
	In this section, we invoke the fixed point formulation to introduce a new resolvent dynamical system of the third order associated with mixed variational inequalities \eqref{MVI} and to investigate some accompanying suitable discretizations forms. These continuous-time dynamical systems and their discrete-time counterparts suggest some inertial proximal methods for solving mixed variational inequalities. These inertial implicit and explicit methods are constructed using the central finite difference and forward/backward finite difference schemes and its variants.
	\medskip 
	
	The third-order resolvent dynamical system takes the following form.
	Consider the problem of finding a trajectory $x(t)\in\HH$ such that 
	\begin{equation}\label{third-ord DS}
		\left\{\begin{array}{lll} \alpha\dddot{x}+\beta \ddot{x}+\gamma\dot{x}+x = J_{\varphi}(x-\lambda T(x)), \\ x(t_0)=x_0, \\ \Dot{x}(t_0)=x_1, \\
			\ddot{x}(t_0)=x_2, 
		\end{array}\right.
	\end{equation}
	where $\alpha, \beta, \gamma >0$ are constants and \( x(t) \) is the state variable. 
	The differential system \eqref{third-ord DS} recovers several existing dynamics-type approaches and projection-based algorithms for solving mixed variational inequalities. Following is some particular cases of the general system \eqref{third-ord DS}. If $\alpha=0, \gamma=1$, then \eqref{third-ord DS} is reduced to the scond-order resolvent dynamical system introduced and studied by Bin-Mohsin et al. \cite{Bin-Mohsin} as \[\beta \ddot{x}+\dot{x}+x = J_{\varphi}(x-\lambda T(x)), \ x(t_0)=x_0, \ \Dot{x}(t_0)=x_1.\] 
	If $\alpha=0=\beta$, then \eqref{third-ord DS} recovers the resolvent dynamical system which was analyzed by Noor \cite{Noor-resolvent}, \[\frac{d x}{dt}=\gamma \big[J_{\varphi}(x-\lambda T(x))-x\big], \ x(t_0)=x_0.\] 
	If $\alpha=0=\beta=\gamma$, then the system \eqref{third-ord DS} is equivalent to the classical gradient projection for smooth constrained optimization problems and projection-like methods for solving variational inequalities.

	\subsection{Iterative methods}
	We start, as in most standard ways, with the discretization of the space derivatives. 
	Taking suitable disctretization of \eqref{third-ord DS}, and by using the central finite difference, backward difference and forward difference schemes, we propose explicit and implicit forms which enable us to obtain the discretized counterpart of \eqref{third-ord DS} of order three as a resolvent equation: 
	\begin{equation}\label{discrete}
		\begin{split}
			\alpha\displaystyle\frac{x_{n+2}-2x_{n+1}+2x_{n-1}-x_{n-2}}{2h^3} & +\beta \frac{x_{n+1}-2x_n+x_{n-1}}{h^2} +\gamma \frac{x_n-x_{n-1}}{h}+x_{n+2}\\ &=J_\varphi(x_n-\lambda T(x_{n+2}))
		\end{split}
	\end{equation}
	where $h$ is the step size for the iterative process. This discrete scheme \eqref{discrete} suggests a new \textit{implicit} iterative method for solving mixed variational inequalities \eqref{MVI} by the third order central difference formula.
	\begin{alg}\label{Algorithm1}
		For any $x_0\in\HH$, and for any nonnegative integer $n\in\ZZ_+$, compute $x_{n+2}$ by the iterative process 
		\begin{equation}\label{algorithm}
			\begin{split}
				x_{n+2} & =J_\varphi\bigg[ x_n-\lambda T(x_{n+2}) \\ & -\frac{\alpha x_{n+2}-2(\alpha-\beta h)x_{n+1}-2(2\beta h-\gamma h^2)x_n+2(\alpha+\beta h-\gamma h^2)x_{n-1}-\alpha x_{n-2}}{2h^3}\bigg]
			\end{split}
		\end{equation}
	\end{alg}
	This algorithm is inertial proximal-type method for solving \eqref{MVI}. Using Lemma \ref{resolventlemma}, Algorithm \ref{Algorithm1} can be rewritten in the variational equivalent form: 
	\begin{alg}
		For any $x_0\in\HH$, and for any nonnegative integer $n\in\ZZ_+$, compute $x_{n+2}$ by the iterative process 
		\begin{equation}\label{MVI-alg}
			\begin{split}
				\big\la\lambda T(x_{n+2})&+\frac{\alpha x_{n+2}-2(\alpha-\beta h)x_{n+1}-2(2\beta h-\gamma h^2)x_n+2(\alpha+\beta h-\gamma h^2)x_{n-1}-\alpha x_{n-2}}{2h^3}, \\ & y-x_{n+2}\big\ra +\varrho\varphi(y)-\varrho\varphi(x_{n+2})\geq 0, \ \ \ \forall y\in\HH
			\end{split}
		\end{equation}
	\end{alg}
	For the sake of simplicity, take $\alpha=\beta=\gamma=1$, and by using different discretization, Algorithm \ref{Algorithm1} reduces to the following iterative: 
	\begin{equation*}
		\begin{split}
			\frac{x_{n+2}-2x_{n+1}+2x_{n-1}-x_{n-2}}{2h^3} & +\frac{x_{n+1}-2x_n+x_{n-1}}{h^2} + \frac{x_n-x_{n-1}}{h}+x_{n+2}\\ &=J_\varphi(x_n-\lambda T(x_{n}))
		\end{split}
	\end{equation*}
	which yields to the following recurrence formula
	\begin{equation}\label{explicit}
		x_{n+2}  =\frac{\hat{h}}{1+\hat{h}}J_\varphi\bigg[ (1-\frac{1}{h}+\frac{2}{h^2})x_n-\lambda T(x_{n}) -\frac{(2h-2)x_{n+1}+(2+2h-2h^2)x_{n-1}-x_{n-2}}{2h^3}\bigg]
	\end{equation}
	where $\hat{h}=2h^3$. This is called an inertial \textit{explicit} proximal method for solving mixed variational inequalities \eqref{MVI}. In this manner, we can suggest several explicit and implicit recursive methods for approximating solutions of mixed variational inequalities \eqref{MVI}. Furthermore, we can obtain a different discretization by using the central finite difference and this time with forward difference scheme rather than backward scheme as in \eqref{discrete}, which allows us to propose a new iterative approach  
	\begin{equation}\label{last-implicit}
		\begin{split}
			\alpha\displaystyle\frac{x_{n+2}-2x_{n+1}+2x_{n-1}-x_{n-2}}{2h^3} & +\beta \frac{x_{n+1}-2x_n+x_{n-1}}{h^2} +\gamma \frac{x_{n+1}-x_n}{h}+x_{n+2}\\ &=J_\varphi(x_n-\lambda T(x_{n+1}))
		\end{split}
	\end{equation}
	which can be, equivalently, derived as the following inertial \textit{implicit} proximal method:
	\begin{alg}\label{Algorithm2}
		For any $x_0\in\HH$, and for any nonnegative integer $n\in\ZZ_+$, compute $x_{n+1}$ by  
		\begin{equation}
			\begin{split}
				x_{n+2} & =J_\varphi\bigg[ x_n-\lambda T(x_{n+1}) \\ & -\frac{\alpha x_{n+2}-2(\alpha-\beta h-\gamma h^2)x_{n+1}-2(2\beta h+\gamma h^2)x_n +2(\alpha+\beta h)x_{n-1}-\alpha x_{n-2}}{2h^3}\bigg]
			\end{split}
		\end{equation}
	\end{alg}
	We note that by applying suitable discretizations, one can establish and design a variety of inertial explicit and implicit proximal-type methods for solving variational inequalities of the second kind \eqref{MVI}. Convergence analyses for Algorithm \ref{Algorithm1} and global stability for the third-order dynamical system \eqref{third-ord DS} are derived in the remaining part of this work. 

	\subsection{Convergence of a discrete system}
	In this section, we derive the convergence of a solution to the implicit iterative scheme \eqref{algorithm} and equivalent variational form \eqref{MVI-alg} given by Algorithm \eqref{Algorithm1}. However, other implicit \eqref{last-implicit} and explicit \eqref{explicit} proposed methods have a very similar arguments and follow the same guidlines except that there are some minor differences which is due to the values of the scalars formatting of $\alpha,\beta,\gamma$, and also due to the existing diverse discretization schemes.    
	\begin{theorem}\label{nonincreasing}
		Let $x\in\HH$ be the solution of the mixed variational inequality \eqref{MVI} and $x_{n+2}$ be the approximate solution using the inertial proximal method in \eqref{MVI-alg}. If $T$ is monotone, then 
		\begin{equation}\label{theorem-result}
			\begin{split}
				(\alpha-\beta h+\gamma h^2)\|x &-x_{n+2}\|^2 \leq\ \alpha\|x-2x_{n+1}+2x_{n-1}-x_{n-2}\|^2 \\ &-\alpha \|x_{n+2}-2x_{n+1}+2x_{n-1}-x_{n-2}\|^2+ \beta h\|x_{n+1}-2x_{n}+x_{n-1}\|^2 \\ & +\gamma h^2\|x_n-x_{n-1}+x-x_{n+2}\|^2-\gamma h^2\|x_{n}-x_{n-1}\|^2.
			\end{split}
		\end{equation}
	\end{theorem}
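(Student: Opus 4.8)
The plan is to confront the algorithmic variational inequality \eqref{MVI-alg} with the defining inequality \eqref{MVI} of the exact solution $x$, use the monotonicity of $T$ to annihilate the operator cross-term, and then expand the three inertial difference blocks by polarization until the squared norms in \eqref{theorem-result} emerge. As a preliminary bookkeeping step I would let $A_n$ denote the numerator appearing in \eqref{algorithm} and record the elementary regrouping $A_n = \alpha u_n + 2\beta h\, v_n + 2\gamma h^2 w_n$, where $u_n := x_{n+2}-2x_{n+1}+2x_{n-1}-x_{n-2}$, $v_n := x_{n+1}-2x_n+x_{n-1}$ and $w_n := x_n-x_{n-1}$ are (scaled) the third-order central, the second-order central, and the first-order backward difference blocks respectively. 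These are exactly the quantities whose norms appear on the right-hand side of \eqref{theorem-result}.

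Next I would write down the two governing inequalities. Testing \eqref{MVI-alg} at $y=x$ gives
\[\Big\langle \lambda T(x_{n+2}) + \tfrac{1}{2h^3}A_n,\ x - x_{n+2}\Big\rangle + \varrho\varphi(x) - \varrho\varphi(x_{n+2}) \geq 0,\]
while testing \eqref{MVI} at the iterate $x_{n+2}$ and scaling by $\lambda$ (identifying the two proximal parameters $\varrho=\lambda$) gives
\[\langle \lambda T(x),\ x_{n+2}-x\rangle + \lambda\varphi(x_{n+2}) - \lambda\varphi(x) \geq 0.\]
Adding the two, the four $\varphi$-terms cancel identically and the operator contribution collapses to $-\lambda\langle T(x_{n+2})-T(x),\ x_{n+2}-x\rangle$, which is $\le 0$ by Definition \ref{monotonedefi}(i). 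Discarding this nonpositive quantity leaves the purely inertial inequality $\langle A_n,\ x_{n+2}-x\rangle \le 0$, i.e.
\[\langle \alpha u_n + 2\beta h\, v_n + 2\gamma h^2 w_n,\ x_{n+2}-x\rangle \le 0.\]

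Finally I would expand each block by the polarization identities $2\langle a,b\rangle = \|a\|^2+\|b\|^2-\|a-b\|^2$ and its three-point form $2\langle a-b,a-c\rangle = \|a-b\|^2+\|a-c\|^2-\|b-c\|^2$. Writing $u_n = x_{n+2}-p_n$ with $p_n := 2x_{n+1}-2x_{n-1}+x_{n-2}$, the three-point identity converts the $\alpha$-block (up to the factor $\tfrac12$ supplied by polarization) into $\|u_n\|^2+\|x-x_{n+2}\|^2-\|x-p_n\|^2$, whose two outer norms are precisely $\|x_{n+2}-2x_{n+1}+2x_{n-1}-x_{n-2}\|^2$ and $\|x-2x_{n+1}+2x_{n-1}-x_{n-2}\|^2$; the $\gamma$-block similarly becomes $-\gamma h^2\big(\|w_n+x-x_{n+2}\|^2-\|w_n\|^2-\|x-x_{n+2}\|^2\big)$. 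Collecting the coefficients of $\|x-x_{n+2}\|^2$ onto the left and the remaining squared norms onto the right then reproduces \eqref{theorem-result}.

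The hard part will be the $\beta$-block: the second difference $v_n$ has no natural three-point alignment with $x_{n+2}-x$, so a clean polarization is unavailable and one must instead estimate $2\langle v_n,\ x_{n+2}-x\rangle$ by a Young/Cauchy--Schwarz bound, which is what forces the single squared norm $\beta h\|x_{n+1}-2x_n+x_{n-1}\|^2$ on the right and the $-\beta h$ correction in the coefficient $(\alpha-\beta h+\gamma h^2)$ on the left. Getting the bookkeeping of the $h$-powers and of the $\tfrac12$ polarization factors to land exactly on the stated constants is the only genuine obstacle; once the coefficient accounting is carried out carefully, everything else is a mechanical substitution driven by the resolvent and monotonicity steps above.
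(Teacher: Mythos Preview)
Your proposal is correct and mirrors the paper's own proof almost step for step: combine \eqref{MVI-alg} at $y=x$ with the solution inequality at $y=x_{n+2}$, use monotonicity of $T$ to kill the operator cross-term and isolate $\langle A_n,x-x_{n+2}\rangle\ge 0$, then regroup $A_n=\alpha u_n+2\beta h\,v_n+2\gamma h^2 w_n$ and apply the polarization identity $2\langle a,b\rangle=\|a+b\|^2-\|a\|^2-\|b\|^2$ to the $\alpha$- and $\gamma$-blocks while bounding the $\beta$-block via $2\langle a,b\rangle\le\|a\|^2+\|b\|^2$. The only cosmetic difference is that the paper first passes to the Minty form $\langle\lambda T(y),y-x\rangle+\varrho\varphi(y)-\varrho\varphi(x)\ge 0$ before specializing $y=x_{n+2}$, whereas you add the two raw inequalities and invoke monotonicity on the resulting cross-term; the outcome is identical.
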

	\begin{proof}
		Let $x\in\HH$ be a solution of the mixed variational inequality \eqref{MVI}. Since $T$ is monotone operator and for any $\lambda >0$, we obtain 
		\begin{equation}\label{MVI-conv-thm}
			\langle \lambda T(y) , \; y-x \rangle +\varrho\varphi(y)-\varrho\varphi(x) \geq 0 \ , \ \text{for all} \ y\in\HH
		\end{equation}
		Take $y=x_{n+2}$ in \eqref{MVI-conv-thm} and $y=x$ in \eqref{MVI-alg} then we have, respectively, 
		\begin{equation}\label{11}
			\langle\lambda T(x_{n+2}) , \; x_{n+2}-x \rangle +\varrho\varphi(x_{n+2})-\varrho\varphi(x) \geq 0 \ ,
		\end{equation}
		and
		\begin{equation}\label{12}
			\begin{split}
				\big\la\lambda T(x_{n+2})&+\frac{\alpha x_{n+2}-2(\alpha-\beta h)x_{n+1}-2(2\beta h-\gamma h^2)x_n+2(\alpha+\beta h-\gamma h^2)x_{n-1}-\alpha x_{n-2}}{2h^3}, \\ & x-x_{n+2}\big\ra +\varrho\varphi(x)-\varrho\varphi(x_{n+2})\geq 0
			\end{split}
		\end{equation}
		Combining \eqref{11} and \eqref{12} together, we have  
		\begin{equation*}
			\big\la\alpha x_{n+2}-2(\alpha-\beta h)x_{n+1}-2(2\beta h-\gamma h^2)x_n+2(\alpha+\beta h-\gamma h^2)x_{n-1}-\alpha x_{n-2},x-x_{n+2}\big\ra \geq 0.
		\end{equation*}
		Manipulating and rewriting the latter inequality as 
		\begin{equation}\label{13}
			\begin{split}
				0\leq & \ \la\alpha x_{n+2}-2(\alpha-\beta h)x_{n+1}-2(2\beta h-\gamma h^2)x_n+2(\alpha+\beta h-\gamma h^2)x_{n-1}-\alpha x_{n-2},x-x_{n+2}\ra\\ \leq & \ \la\alpha (x_{n+2}-2x_{n+1}+2x_{n-1}-x_{n-2})+2\beta h(x_{n+1}-2x_n+x_{n-1}) \\ & +2\gamma h^2(x_n-x_{n-1}),x-x_{n+2}\ra\\ \leq & \ \alpha\la x_{n+2}-2x_{n+1}+2x_{n-1}-x_{n-2},x-x_{n+2}\ra+2\beta h \la x_{n+1}-2x_n+x_{n-1},x-x_{n+2}\ra \\ & +2\gamma h^2\la x_n-x_{n-1},x-x_{n+2}\ra\ .
			\end{split}
		\end{equation}
		Invoking the properties and relationships between vector norms and vector inner products by using the norm of addition of vectors followed from the law of cosine i.e., $2\la x,y\ra =\|x+y\|^2-\|x\|^2-\|y\|^2$ and $2\la x,y\ra\leq\|x\|^2+\|y\|^2.$ Hence, the last line of the above inequality \eqref{13} can be written as 
		\begin{equation*}
			\begin{split}
				0\leq & \ \alpha\Big(\|x_{n+2}-2x_{n+1}+2x_{n-1}-x_{n-2}+x-x_{n+2}\|^2-\|x_{n+2}-2x_{n+1}+2x_{n-1}-x_{n-2}\|^2 \\ & -\|x-x_{n+2}\|^2\Big)+\beta h\Big(\|x_{n+1}-2x_{n}+x_{n-1}\|^2+\|x-x_{n+2}\|^2\Big) \\ & +\gamma h^2\Big(\|x_n-x_{n-1}+x-x_{n+2}\|^2-\|x_{n}-x_{n-1}\|^2-\|x-x_{n+2}\|^2\Big) \\ = & \ \alpha\|x-2x_{n+1}+2x_{n-1}-x_{n-2}\|^2-\alpha \|x_{n+2}-2x_{n+1}+2x_{n-1}-x_{n-2}\|^2-\alpha \|x-x_{n+2}\|^2 \\ &+\beta h\|x_{n+1}-2x_{n}+x_{n-1}\|^2+\beta h\|x-x_{n+2}\|^2+\gamma h^2\|x_n-x_{n-1}+x-x_{n+2}\|^2 \\ & -\gamma h^2\|x_{n}-x_{n-1}\|^2-\gamma h^2\|x-x_{n+2}\|^2
			\end{split}
		\end{equation*}
		which implies
		\begin{equation*}
			\begin{split}
				(\alpha-\beta h+\gamma h^2)\|x &-x_{n+2}\|^2 \leq\ \alpha\|x-2x_{n+1}+2x_{n-1}-x_{n-2}\|^2 \\ &-\alpha \|x_{n+2}-2x_{n+1}+2x_{n-1}-x_{n-2}\|^2 +\beta h\|x_{n+1}-2x_{n}+x_{n-1}\|^2 \\ & +\gamma h^2\|x_n-x_{n-1}+x-x_{n+2}\|^2-\gamma h^2\|x_{n}-x_{n-1}\|^2.
			\end{split}
		\end{equation*}
		Thus, we have proved the convergence result using the technique of Alvarez and Attouch \cite{Alvarez} for the solution $x\in\HH$ of the mixed variational inequality \eqref{MVI}.
	\end{proof}
	
	\begin{theorem}
		Let $x\in\HH$ be the solution of \eqref{MVI}. Let $x_{n+2}$ be the approximate solution of Algorithm \ref{Algorithm1}, Suppose that the operator $T$ is monotone, then the generated sequence from \eqref{MVI-alg} converges to the solution $x$, i.e., 
		$\displaystyle\lim_{n\to\infty}x_{n+2}=x.$
	\end{theorem}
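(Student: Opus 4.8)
The plan is to upgrade the one-step estimate of Theorem~\ref{nonincreasing} into a Lyapunov (energy) argument and then extract convergence by telescoping, following the technique of Alvarez and Attouch~\cite{Alvarez}. First I would lighten the notation by writing $e_n:=x-x_{n+2}$ for the error, $v_n:=x_n-x_{n-1}$, $s_n:=x_{n+1}-2x_n+x_{n-1}$, and $d_n:=x_{n+2}-2x_{n+1}+2x_{n-1}-x_{n-2}$, so that the right-hand side of \eqref{theorem-result} becomes $\alpha\|e_n+d_n\|^2-\alpha\|d_n\|^2+\beta h\|s_n\|^2+\gamma h^2\|e_n+v_n\|^2-\gamma h^2\|v_n\|^2$. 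Expanding the two completed squares reduces \eqref{theorem-result} to an inequality that pairs the cross terms $\langle e_n,d_n\rangle$ and $\langle e_n,v_n\rangle$ against the dissipative quantities $\|d_n\|^2,\|s_n\|^2,\|v_n\|^2$; this rewriting is routine and is the bridge from the single given inequality to a recursive energy estimate.

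Next I would impose the parameter conditions that make the argument run. Beyond $\alpha-\beta h+\gamma h^2>0$ (so the left-hand side of \eqref{theorem-result} is a genuine positive multiple of $\|e_n\|^2$), I expect to need a smallness condition on the step size $h$ together with a relation among $\alpha,\beta,\gamma$ allowing me to choose nonnegative weights and form a Lyapunov sequence $E_n:=\|e_n\|^2+c_1\|v_n\|^2+c_2\|s_n\|^2$ that obeys a descent inequality $E_n\le E_{n-1}-\mu\big(\|d_n\|^2+\|s_n\|^2+\|v_n\|^2\big)$ for some $\mu>0$. Deriving this descent inequality is exactly where the Alvarez--Attouch machinery is invoked, and it immediately yields that $(E_n)$ is nonincreasing and bounded below by $0$, hence convergent, and in particular that $(x_n)$ is bounded.

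Summing the descent inequality over $n$ then gives $\sum_n\big(\|d_n\|^2+\|s_n\|^2+\|v_n\|^2\big)<\infty$, so the increments $v_n=x_n-x_{n-1}\to0$ and the second and third finite differences vanish. Since the correction term appearing in \eqref{MVI-alg} is precisely $\tfrac{1}{2h^3}\big(\alpha d_n+2\beta h\,s_n+2\gamma h^2 v_n\big)$, it too tends to $0$; passing to the limit in \eqref{MVI-alg} along a weak cluster point $\bar x$ of the bounded sequence $(x_{n+2})$ and using monotonicity of $T$ (a Minty-type argument, together with lower semicontinuity of $\varphi$) shows that $\bar x$ solves \eqref{MVI}. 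Because the estimate of Theorem~\ref{nonincreasing} holds verbatim for every solution, $\|x'-x_{n+2}\|$ converges for each solution $x'$; taking $x'=\bar x$ forces $\|\bar x-x_{n+2}\|\to0$, and an Opial-type argument (or uniqueness of the solution) then identifies the limit with $x$, giving $\lim_{n\to\infty}x_{n+2}=x$.

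The step I expect to be the main obstacle is constructing the descent inequality. The term $\gamma h^2\|e_n+v_n\|^2$ carries the wrong sign for dissipation: it couples the error $e_n$ with the velocity $v_n$ and can enlarge the energy, so no descent holds for arbitrary parameters. Choosing the weights $c_1,c_2$, fixing the admissible range of $h$, and aligning the indices of $E_n$ so that the bad cross terms are absorbed by $-\alpha\|d_n\|^2$, $-\gamma h^2\|v_n\|^2$ and by the genuine coefficient on $\|e_n\|^2$ is the delicate bookkeeping on which everything rests; it may well require an additional standard hypothesis such as $\sum_n\|x_{n+1}-x_n\|^2<\infty$. Once the descent inequality is secured, the telescoping, the vanishing of the differences, and the limit identification are comparatively routine.
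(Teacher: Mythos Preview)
Your plan follows the same Alvarez--Attouch outline as the paper: from the one-step estimate \eqref{theorem-result} deduce boundedness of $(x_n)$ and summability of the increments, conclude $\|x_n-x_{n-1}\|\to 0$, pass to the limit in \eqref{MVI-alg} along a subsequence to see that any cluster point solves \eqref{MVI}, and then use the estimate once more (with the cluster point in place of $x$) to pin down a unique limit. In that sense the strategies coincide.

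Where you differ is in rigor at the step you flag as the main obstacle. The paper does not build a Lyapunov sequence $E_n$ as you propose; it simply asserts that Theorem~\ref{nonincreasing} already shows $\{\|x-x_n\|\}$ is nonincreasing, then sums \eqref{theorem-result} and, ``after algebraic manipulations,'' arrives at a bound of the form
\[
\sum_{n=2}^\infty\|x_n-x_{n-1}\|^2\le \tfrac{\beta}{\gamma h}\big(\|x-x_4\|^2+\|x_3-2x_2+x_1\|^2+\|x_4-2x_3+x_2\|^2\big)+\|x_2-x_1\|^2,
\]
with no derivation of how the non-telescoping terms and the bad-sign term $\gamma h^2\|x_n-x_{n-1}+x-x_{n+2}\|^2$ are absorbed. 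Your proposal to introduce weights $c_1,c_2$, impose a smallness condition on $h$, and absorb the cross terms into $-\alpha\|d_n\|^2$ and $-\gamma h^2\|v_n\|^2$ is exactly the missing bookkeeping; it is more careful than what the paper actually writes, and your caveat that additional parameter restrictions may be needed is well placed.
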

	\begin{proof}
		Let $x\in\HH$ be a solution of \eqref{MVI}. The previous result in Theorem \ref{nonincreasing} showed that $\{\|x-x_n\|\}$ is nonincreasing sequence and consequently $\{x_n\}$ is bounded. It also follows from \eqref{theorem-result} that we have 
		\begin{equation*}
			\begin{split}
				\gamma h^2\sum_{n=2}^\infty\|x_{n} & -x_{n-1}\|^2\ \leq\ (-\alpha+\beta h-\gamma h^2) \sum_{n=2}^\infty\|x-x_{n+2}\|^2 \\ & +\alpha\sum_{n=2}^\infty\|x-2x_{n+1}+2x_{n-1}-x_{n-2}\|^2-\alpha\sum_{n=2}^\infty \|x_{n+2}-2x_{n+1}+2x_{n-1}-x_{n-2}\|^2 \\ & +\beta h\sum_{n=2}^\infty\|x_{n+1}-2x_{n}+x_{n-1}\|^2+\gamma h^2\sum_{n=2}^\infty\|x_n-x_{n-1}+x-x_{n+2}\|^2 
			\end{split}
		\end{equation*}
		and consequently, after algebraic manipulations, the inequality reduced to
		\begin{equation*}
			\sum_{n=2}^\infty\|x_n -x_{n-1}\|^2\ \leq
			\frac{\beta}{\gamma h}( \|x-x_{4}\|^2+\|x_{3}-2x_{2}+x_{1} \|^2+\|x_{4}-2x_{3}+x_{2}\|^2) +\|x_2-x_{1}\|^2
		\end{equation*}
		which implies that
		\begin{equation}\label{convlim}
			\lim_{n\to\infty}\|x_n-x_{n-1}\|^2=0.
		\end{equation}
		Let $x^*$ be an accumulation point of the successive approximations $\{x_n\}$, hence there exists a subsequence $\{x_{n_k}\}\subseteq\{x_n\}$ such that it converges to $x^*\in\HH.$ Replace $x_n$ by the subsequence $x_{n_k}$ in \eqref{MVI-alg} and consider the long-term asymptotic behaviour of the subsequence (i.e., when $n_k\to\infty$) and using \eqref{convlim}, we have
		$$\langle T(x^*) , \; x-x^* \rangle +\varphi(x)-\varphi(x^*) \geq 0 \ , \ \text{for all} \ x\in\HH$$
		which implies that $x^*$ solves the mixed variational inequality \eqref{MVI} and
		\begin{equation*}
			\begin{split}
				\|x &-x_{n+2}\|^2 \leq\ \frac{\alpha}{\alpha-\beta h+\gamma h^2}\|x-2x_{n+1}+2x_{n-1}-x_{n-2}\|^2 \\ &-\frac{\alpha}{\alpha-\beta h+\gamma h^2} \|x_{n+2}-2x_{n+1}+2x_{n-1}-x_{n-2}\|^2+\frac{\beta h}{\alpha-\beta h+\gamma h^2}\|x_{n+1}-2x_{n}+x_{n-1}\|^2 \\ & +\frac{\gamma h^2}{\alpha-\beta h+\gamma h^2}\|x_n-x_{n-1}+x-x_{n+2}\|^2-\frac{\gamma h^2}{\alpha-\beta h+\gamma h^2}\|x_{n}-x_{n-1}\|^2.
			\end{split}
		\end{equation*}
		Consequently, 
		\begin{equation*}
			\begin{split}
				\|x-x_{n+2}\|^2 &\leq\ \frac{\alpha}{\alpha-\beta h+\gamma h^2}\|x-2x_{n+1}+2x_{n-1}-x_{n-2}\|^2 \\ &+\frac{\beta h}{\alpha-\beta h+\gamma h^2}\|x_{n+1}-2x_{n}+x_{n-1}\|^2 \\ & +\frac{\gamma h^2}{\alpha-\beta h+\gamma h^2}\|x_n-x_{n-1}+x-x_{n+2}\|^2 \\ &\leq\ \|x-2x_{n+1}+2x_{n-1}-x_{n-2}\|^2+\|x_n-x_{n-1}+x-x_{n+2}\|^2.
			\end{split}
		\end{equation*}
		Thus it follows from the above inequality that the sequence $\{x_{n+2}\}$ has exactly one accumulation point $x^*$ and $\displaystyle\lim_{n\to\infty}x_{n+2}=x^*$, the required result.
	\end{proof}

	\subsection{Stability of the dynamical system}
	A system is called stable if its long-term behavior (i.e., dynamics) tend to stay somewhere irrespective of the initial conditions. 
	Next we prove the global asymptotic stability using the Lyapunov stability theory for the proposed third-order differential equation \eqref{third-ord DS}.
	Choose a candidate Lyapunov function of the form:
	
	\[V(x)=\frac{1}{2}\big(\dot{x}^2+x^2\big). \]
	Hence, the derivative of the Lyapunov function
	\(\dot{V}(x)=\dot{x}\ddot{x}+x\dot{x}\). For all \(x \neq 0\), we have 
	\[ \begin{split} \dot{V}(x) &= \dot{x} \ddot{x}+x\dot{x} \\&=\frac{\dot{x}}{\beta}(-\alpha\dddot{x}-\gamma\dot{x}-x+J(x-\lambda T(x)))+ x\dot{x} \\ &= -\frac{\alpha}{\beta} (\dot{x}\dddot{x})-\frac{\gamma}{\beta}(\dot{x}\dot{x})-(1-\frac{1}{\beta})x\dot{x} +\frac{\dot{x}}{\beta} J(x-\lambda T(x)) \\
		&< 0 \end{split} \]
	
	It is clear to see that the Lyapunov function $V(x)=\frac{1}{2} \dot{x}^2+\frac{1}{2} x^2\geq 0$
	is non-negative, and \(V(x) = 0\) if and only if \(x = 0\), which implies that \(V(x)\) is positive definite.
	Hence \(\dot{V}(x) < 0\) for all \(x \neq 0\) and \(V(x)\) is positive definite, in the Lyapunov sense. Thus the equilibrium point \(x = 0\) is globally asymptotically stable for the resolvent dynamical system \eqref{third-ord DS}.
	
	\begin{theorem}
		Let $T$ be a strongly monotone operator and Liptschitz continuous map, and $K$ is a convex closed set. Then the dynamical system \eqref{third-ord DS} has a unique equilibrium point that is globally exponentially stable, i.e., \( \|x^*-x\|^2\leq C\exp^{-(\mu-\epsilon)t}, \ where \ \mu >\epsilon.\)
	\end{theorem}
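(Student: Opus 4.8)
The plan is to separate the statement into a uniqueness-of-equilibrium part, handled by a fixed-point contraction, and an exponential-decay part, obtained from a quadratic Lyapunov functional built on a first-order reformulation of \eqref{third-ord DS}. First I would characterise the equilibria: setting $\dddot{x}=\ddot{x}=\dot{x}=0$ in \eqref{third-ord DS} shows that any equilibrium satisfies $x^{*}=J_{\varphi}(x^{*}-\lambda T(x^{*}))$, which by Proposition \ref{Kinder-proposition} is precisely a solution of \eqref{MVI}. Writing $g(x):=J_{\varphi}(x-\lambda T(x))$ and using that the resolvent $J_{\varphi}$ is nonexpansive together with the strong monotonicity (modulus $\mu$) and $L$-Lipschitz continuity of $T$, I would expand
\[
\|g(x)-g(y)\|^{2}\le\|(x-y)-\lambda(T(x)-T(y))\|^{2}\le\bigl(1-2\lambda\mu+\lambda^{2}L^{2}\bigr)\|x-y\|^{2},
\]
so that for $0<\lambda<2\mu/L^{2}$ the constant $\theta:=\sqrt{1-2\lambda\mu+\lambda^{2}L^{2}}<1$ and $g$ is a contraction; Banach's fixed-point theorem then yields the unique equilibrium $x^{*}$.

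Next I would pass to the error $e(t):=x(t)-x^{*}$. Since $x^{*}=g(x^{*})$, subtracting the equilibrium identity from \eqref{third-ord DS} gives $\alpha\dddot{e}+\beta\ddot{e}+\gamma\dot{e}+e=g(x)-g(x^{*})$ with $\|g(x)-g(x^{*})\|\le\theta\|e\|$. Introducing the state $z:=(e,\dot{e},\ddot{e})$, I would rewrite this as $\dot{z}=\mathcal{A}z+\mathcal{B}\,\Phi(e)$, where $\mathcal{A}$ is the companion operator associated with the characteristic polynomial $p(s)=\alpha s^{3}+\beta s^{2}+\gamma s+1$ and $\Phi(e)=g(x^{*}+e)-g(x^{*})$. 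Provided the Routh--Hurwitz condition $\beta\gamma>\alpha$ holds (in addition to $\alpha,\beta,\gamma>0$), $\mathcal{A}$ is Hurwitz, so there is a self-adjoint positive-definite $\mathcal{P}$ solving the Lyapunov equation $\mathcal{A}^{*}\mathcal{P}+\mathcal{P}\mathcal{A}=-\mathcal{I}$. Taking $V(z):=\langle\mathcal{P}z,z\rangle$ and differentiating along trajectories,
\[
\dot{V}=\langle(\mathcal{A}^{*}\mathcal{P}+\mathcal{P}\mathcal{A})z,z\rangle+2\langle\mathcal{P}z,\mathcal{B}\Phi(e)\rangle\le-\|z\|^{2}+2\|\mathcal{P}\|\,\|\mathcal{B}\|\,\theta\,\|z\|^{2},
\]
using $\|\Phi(e)\|\le\theta\|e\|\le\theta\|z\|$. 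If $\theta$ is small enough (equivalently, $\lambda$ near $\mu/L^{2}$, making the contraction sharp), the right-hand side is bounded by $-cV$ with $c=\mu-\epsilon>0$; integrating this differential inequality and invoking the norm equivalence $\|e\|^{2}\le\|z\|^{2}\le\kappa V$ yields $\|x(t)-x^{*}\|^{2}\le C\,e^{-(\mu-\epsilon)(t-t_{0})}$, the asserted exponential stability.

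The hard part will be the last Lyapunov step: one must ensure that the nonlinear contraction term $\mathcal{B}\Phi(e)$ cannot overwhelm the negative-definite linear dissipation $-\|z\|^{2}$. This forces two genuine hypotheses that the statement leaves implicit, namely a Routh--Hurwitz-type relation among $\alpha,\beta,\gamma$ (so that the third-order linear part is itself exponentially stable), and a smallness condition on the contraction modulus $\theta$, i.e.\ a restriction on $\lambda$ relative to $\mu$ and $L$. Carrying the constants through so that the decay exponent emerges as exactly $\mu-\epsilon$ is the delicate bookkeeping at the core of the argument, and it is where I would expect the analysis to require the most care.
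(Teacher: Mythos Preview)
Your approach is sound but genuinely different from the paper's. The paper does not pass to a first-order system or solve a Lyapunov equation; instead it works directly with the variational characterisation. It sets $y=x^{*}$ in Lemma~\ref{resolventlemma} with $x$ replaced by $x-\lambda Tx$ (so that $J_{\varphi}(x-\lambda Tx)=\alpha\dddot{x}+\beta\ddot{x}+\gamma\dot{x}+x$), combines this with the MVI inequality at $x^{*}$, and after expanding the inner products uses identities of the type $-\langle x^{*}-x,\dot{x}\rangle=\tfrac{1}{2}\tfrac{d}{dt}\|x^{*}-x\|^{2}$ (and analogues for $\ddot{x},\dddot{x}$) together with the Lipschitz bound on $T$ to obtain a scalar differential inequality for $\|x^{*}-x(t)\|^{2}$. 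It then multiplies by $e^{\mu t}$, integrates over $[t_{0},t]$, and repeats with $e^{(\mu-\epsilon)t}$ to extract the decay $\|x^{*}-x\|^{2}\le C e^{-(\mu-\epsilon)t}$. Your route has the advantage of making the structural hypotheses explicit---you correctly isolate the Routh--Hurwitz condition $\beta\gamma>\alpha$ and the smallness requirement on $\theta$, neither of which the paper states---and of giving a clean separation between the linear dissipation and the nonlinear perturbation. The paper's route avoids the operator Lyapunov machinery and stays closer to the variational formulation, but its derivative identities and the integration steps are handled rather informally; in particular the constants and the precise emergence of the rate $\mu-\epsilon$ are not tracked any more carefully there than in your sketch.
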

	\begin{proof}
		Set $y=x^*$ in [Lemma \ref{resolventlemma}] and $x=\alpha \dddot{x}+\beta\ddot{x}+\gamma\dot{x}+x$ in \eqref{MVI} for all \(\varrho>0\), we have 
		\[ \la x-\lambda Tx-J,x^*-J\ra+\varrho\varphi(x^*)-\varrho\varphi(J)\leq 0,\] where \(J=\alpha \dddot{x}+\beta\ddot{x}+\gamma\dot{x}+x\), and 
		\[\lambda\la Tx^*,x^*-J\ra+\varrho\varphi(J)-\varrho\varphi(x^*)\leq 0.\]
		Combine the last two inequalities together, we obtain 
		\[\la x-\lambda Tx+\lambda Tx^*-(\alpha \dddot{x}+\beta\ddot{x}+\gamma\dot{x}+x),x^*-(\alpha \dddot{x}+\beta\ddot{x}+\gamma\dot{x}+x)\ra\leq 0,\]
		or, 
		\[\la x-\lambda (Tx-Tx^*)-(\alpha \dddot{x}+\beta\ddot{x}+\gamma\dot{x}+x),x^*-(\alpha \dddot{x}+\beta\ddot{x}+\gamma\dot{x}+x)\ra\leq 0.\]
		Rearranging terms
		\[\lambda\la Tx^*-Tx,x^*-x\ra-\lambda\la Tx^*-Tx,\alpha \dddot{x}+\beta\ddot{x}+\gamma\dot{x}\ra-\la\alpha\dddot{x}+\beta\ddot{x}+\gamma\dot{x},x^*-x\ra+\|\alpha \dddot{x}+\beta\ddot{x}+\gamma\dot{x}\|^2\leq 0\]
		After using simple computational rearrangements and transformations, we have 
		\begin{equation}\label{1sim-computations} \begin{split}
				\lambda & \la Tx^*-Tx,x^*-x\ra -\alpha\la\lambda(Tx^*-Tx)+x^*-x,\dddot{x}\ra- \beta\la\lambda(Tx^*-Tx)+x^*-x,\ddot{x}\ra \\& -\gamma\la\lambda(Tx^*-Tx)+x^*-x,\dot{x}\ra+ \alpha^2\|\dddot{x}\|^2 +\beta^2\|\ddot{x}\|^2 +\gamma^2\|\dot{x}\|^2+\alpha\la\dddot{x} ,\beta\ddot{x}+\gamma\dot{x}\ra \\&+\beta\la\ddot{x},\alpha\dddot{x}+\gamma\dot{x}\ra+\gamma\la\dot{x},\alpha\dddot{x}+\beta\ddot{x}\ra \leq 0
		\end{split}\end{equation}
		Since the terms $\alpha^2\|\dddot{x}\|^2, \ \beta^2\|\ddot{x}\|^2,\ \gamma^2\|\dot{x}\|^2$ are all nonnegative, then the following inequality holds
		\[\begin{split}
			\lambda & \la Tx^*-Tx,x^*-x\ra -\alpha\la\lambda(Tx^*-Tx)+x^*-x,\dddot{x}\ra- \beta\la\lambda(Tx^*-Tx)+x^*-x,\ddot{x}\ra \\& -\gamma\la\lambda(Tx^*-Tx)+x^*-x,\dot{x}\ra+\alpha\la\dddot{x} ,\beta\ddot{x}+\gamma\dot{x}\ra \\&+\beta\la\ddot{x},\alpha\dddot{x}+\gamma\dot{x}\ra+\gamma\la\dot{x},\alpha\dddot{x}+\beta\ddot{x}\ra \leq\eqref{1sim-computations}\leq 0
		\end{split}\]
		Consequently, 
		\begin{equation}\label{2sim-somputations}\begin{split}
				\lambda\la Tx^*&-Tx,x^*-x\ra -\alpha\la\lambda(Tx^*-Tx)+x^*-x-(\beta\ddot{x}+\gamma\dot{x}),\dddot{x}\ra \\& - \beta\la\lambda(Tx^*-Tx)+x^*-x-(\alpha\dddot{x}+\gamma\dot{x}),\ddot{x}\ra \\& -\gamma\la\lambda(Tx^*-Tx)+x^*-x-(\alpha\dddot{x}+\beta\ddot{x}),
				\dot{x}\ra\leq 0
		\end{split}\end{equation}
		Rewriting the following relations \(-\alpha\la x^*-x,\dddot{x}\ra=\alpha\displaystyle\frac{1}{2}\frac{d^3}{dt^3}\|x^*-x\|^2\), \(-\beta\la x^*-x,\ddot{x}\ra=\beta\displaystyle\frac{1}{2}\frac{d^2}{dt^2}\|x^*-x\|^2\), \(-\gamma\la x^*-x,\dot{x}\ra=\gamma\displaystyle\frac{1}{2}\frac{d}{dt}\|x^*-x\|^2\), and using the assumption that $T$ is Lipschitzian then the inequality \eqref{2sim-somputations} can be reduced to:
		\begin{equation}\label{3sim-computations}\begin{split}
				\lambda L\|x^*-x\|^2 & -\alpha\la\lambda(Tx^*-Tx)-(\beta\ddot{x}+ \gamma\dot{x}), \dddot{x}\ra+\alpha\displaystyle\frac{1}{2}\frac{d^3}{dt^3}\|x^*-x\|^2 \\& -\beta\la\lambda(Tx^*-Tx)-(\alpha\dddot{x}+\gamma\dot{x}),\ddot{x}\ra +\beta\displaystyle\frac{1}{2}\frac{d^2}{dt^2}\|x^*-x\|^2 \\& -\gamma\la\lambda(Tx^*-Tx)-(\alpha\dddot{x}+\beta\ddot{x}),
				\dot{x}\ra +\gamma\displaystyle\frac{1}{2}\frac{d}{dt}\|x^*-x\|^2 \ \leq 0
		\end{split}\end{equation}
		Multiply \eqref{3sim-computations} by \(\exp({\mu t})\), therefore 
		\begin{equation}\label{4sim-computations}\begin{split}
				\lambda e^{\mu t} L\|x^*-x\|^2 & -\alpha e^{\mu t}\la\lambda(Tx^*-Tx)-(\beta\ddot{x}+ \gamma\dot{x}), \dddot{x}\ra+\displaystyle\frac{\alpha}{2}\frac{d}{dt}\bigg(e^{\mu t}\frac{d^2}{dt^2}\|x^*-x\|^2\bigg) \\& -\beta e^{\mu t}\la\lambda(Tx^*-Tx)-(\alpha\dddot{x}+\gamma\dot{x}),\ddot{x}\ra +\displaystyle\frac{\beta}{2}\frac{d}{dt}\bigg(e^{\mu t}\frac{d}{dt}\|x^*-x\|^2\bigg) \\& -\gamma e^{\mu t}\la\lambda(Tx^*-Tx)-(\alpha\dddot{x}+\beta\ddot{x}),
				\dot{x}\ra +\displaystyle\frac{\gamma}{2}\frac{d}{dt}\Big(e^{\mu t}\|x^*-x\|^2\Big) \ \leq 0
		\end{split}\end{equation}
		Integrating \eqref{4sim-computations} from $t_0$ to $t$, and moving some constant terms to the right hand side, then the inequality \eqref{4sim-computations} reduces to
		\begin{equation}\label{5sim-computations}\begin{split}
				\alpha\frac{d^2}{dt^2}\|x^*-x\|^2+\beta\frac{d}{dt}\|x^*-x\|^2+\gamma\|x^*-x\|^2 \leq 2C_1e^{-\mu t} 
		\end{split}\end{equation}
		where \[\begin{split}
			C_1=e^{\mu t}&\bigg(\lambda L\|x^*-x\|^2-\alpha\la\lambda(Tx^*-Tx)-(\beta\ddot{x}+ \gamma\dot{x}), \dddot{x}\ra \\& -\beta\la\lambda(Tx^*-Tx)-(\alpha\dddot{x}+\gamma\dot{x}),\ddot{x}\ra-\gamma\la\lambda(Tx^*-Tx)-(\alpha\dddot{x}+\beta\ddot{x}),
			\dot{x}\ra\bigg)
		\end{split}\]
		Similarly, multiply the inequality \eqref{5sim-computations} by $[\exp({\mu -\epsilon) t}]$, where $\mu >\epsilon >0$, and integrating \eqref{5sim-computations} twice from $t_0$ to $t.$ Hence inequality \eqref{5sim-computations} can be reduced, after suitable calculations, to 
		\[\begin{split}
			\|x^*-x\|^2\leq \text{(Constant)}\ e^{-(\mu-\epsilon)t}
		\end{split}.\]
		Proving that the trajectory $x(t)$ converges to $x^*$ with an exponential rate. 
	\end{proof}
	

	\section{Conclusion}
	In this paper we consider a new dynamical system approach via resolvent operators designed to approximate the solution to a given variational inequality of the second kind (i.e., mixed varaitional inequality). This is done, by exploiting the equivalence between the stationary points of the associated dynamical system and the solutions of the mixed variational inequality problem, i.e., by proving that trajectories of these dynamical systems converge to the unique solution of the mixed variational inequalities. It can be expected that the techniques described in this paper will be useful for more elaborate dynamical models, such as stochastic models, and that the connection between such dynamical models and the mixed variational inequalities will provide a deeper understanding of variational equilibrium problems since the proposed discrete-time algorithms can be considered as continuous-time perspectives for solving mixed variational inequalities. 
	The stability analysis of the novel dynamical system technique has been investigated in the spirit of the Lyapunov function constructed in this framework. This approach usually, without the need to know the system's explicit solutions, provides qualitative behaviour of the system around the equilibrium points. One of the advantages of this approach is studying changes over time for energy-like functions (Lyapunov functions) without solving the differential equation.
	\medskip 
	
	Despite their validity, combining third-order dynamics into mixed variational inequalities carries various challenges due to the computational complexity when proposing composite optimization algorithms for solving such systems. 
	Future research directions may focus on developing efficient algorithms, integrating machine learning techniques for parameter estimation, and extending the framework to stochastic environments and/or to nonmonotone manners whether on operators or in line searches for linearly convergence of algorithms. The proposed implicit and explicit algorithms may be extended for a broader class of generalized equilibrium problems and even beyond the convexity scope to nonconvex equilibrium variational problems.

	
	\medskip
	
	\subsection*{Declarations} 
	The author declares that there was no conflict of interest or competing interest.
	

	
\end{document}